\theoremstyle{plain}
\newtheorem{theorem}{Theorem}[section]
\newtheorem{proposition}[theorem]{Proposition}
\newtheorem{conjecture}[theorem]{Conjecture}
\theoremstyle{definition}
\newtheorem{definition}[theorem]{Definition}
\theoremstyle{remark}
\newcommand{\R}{\mathbb{R}}
\newcommand{\N}{\mathbb{N}}
\DeclareMathOperator{\HausDim}{dim_H}
\DeclareMathOperator{\diam}{diam}
\newcommand{\HausMeas}{\mathcal{H}}
\newcommand{\dimension}{d}
\newcommand{\sepdim}{d_{\text{sep}}}
\newcommand{\tiling}{\mathcal{T}}
\newcommand{\tilinggraph}{G_{\mathcal{T}}}
\newcommand{\cl}[1]{\overline{#1}}
\newcommand{\bdry}{\partial}
\newcommand{\relint}[1]{\operatorname{int}_{F}(#1)}
\newcommand{\Cconst}{C}
\newcommand{\defeq}{\coloneqq}
\begin{document}
	
	\title[The Chromatic Tiling Theorem]{The Chromatic Tiling Theorem: Scaling Laws and the Separation Dimension of Fractal Partitions}
	
	\author{Robin Jackson}
	\email{robinjacksonkenyan@gmail.com}

	\begin{abstract}
		This paper establishes a rigorous, quantitative link between the combinatorial complexity of a fractal partition and the intrinsic geometry of its interfaces. We introduce the concept of the \emph{Separation Dimension} ($\sepdim$), a novel characteristic that quantifies the Hausdorff dimension of the boundaries between tiles. A natural but flawed approach would be to relate coloring complexity to the fractal's ambient topological boundary. We demonstrate that this extrinsic view is untenable for a vast class of self-similar sets. Instead, our intrinsic framework, centered on the Separation Dimension, provides the correct formulation. We define a new class of well-behaved partitions, termed \emph{Geometrically Regular Partitions (GRPs)}, and prove their existence on canonical fractals such as the Sierpinski Carpet. Our main result, the Chromatic Tiling Theorem, provides a sharp upper bound for the chromatic number ($\chi$) of the graph associated with such a partition, proving that it is bounded by $\chi(\tiling) \leq \Cconst (r_{\max}/r_{\min})^{\sepdim}$. This result demonstrates that the scaling of coloring complexity is governed not by the dimension of the fractal itself, but by the dimension of the separation set induced by the partition. We conclude by proposing the minimization of the separation dimension over all admissible partitions as a new variational problem in fractal geometry.
	\end{abstract}

	\maketitle

	\section{Introduction}
	\label{sec:introduction}
	
	The interplay between the continuous geometry of a space and the discrete combinatorics of its partitions constitutes a profound and enduring theme in mathematics, finding its genesis in celebrated problems such as the Four Color Theorem \cite{appel1977}. While the combinatorial properties of tilings on Euclidean and other highly regular spaces have been the subject of extensive investigation \cite{grunbaum1987, adams2007}, the corresponding theory for fractal domains remains largely nascent. The intricate, multi-scale nature of fractal sets, which eschews traditional notions of smoothness and integer dimensionality, presents formidable challenges to such an endeavor. This paper addresses a fundamental question at this frontier: How does the intrinsic geometric complexity of a fractal domain govern the chromatic number of a well-behaved partition defined upon it?
	
	A natural but flawed approach would be to link coloring complexity to the fractal's ambient topological boundary, $\bdry F$. This extrinsic view proves untenable for a vast class of self-similar sets, such as the Sierpinski gasket \cite{sierpinski1916} or the Menger sponge \cite{menger1926}, for which the fractal is topologically coextensive with its boundary (i.e., $\bdry F = F$). In such cases, the distinction between the fractal's "interior" and its "edge" dissolves, rendering any theorem predicated on this distinction ill-posed (\cref{fig:paradox}).
	
	\begin{figure}[ht]
		\centering
		\includegraphics[width=\textwidth]{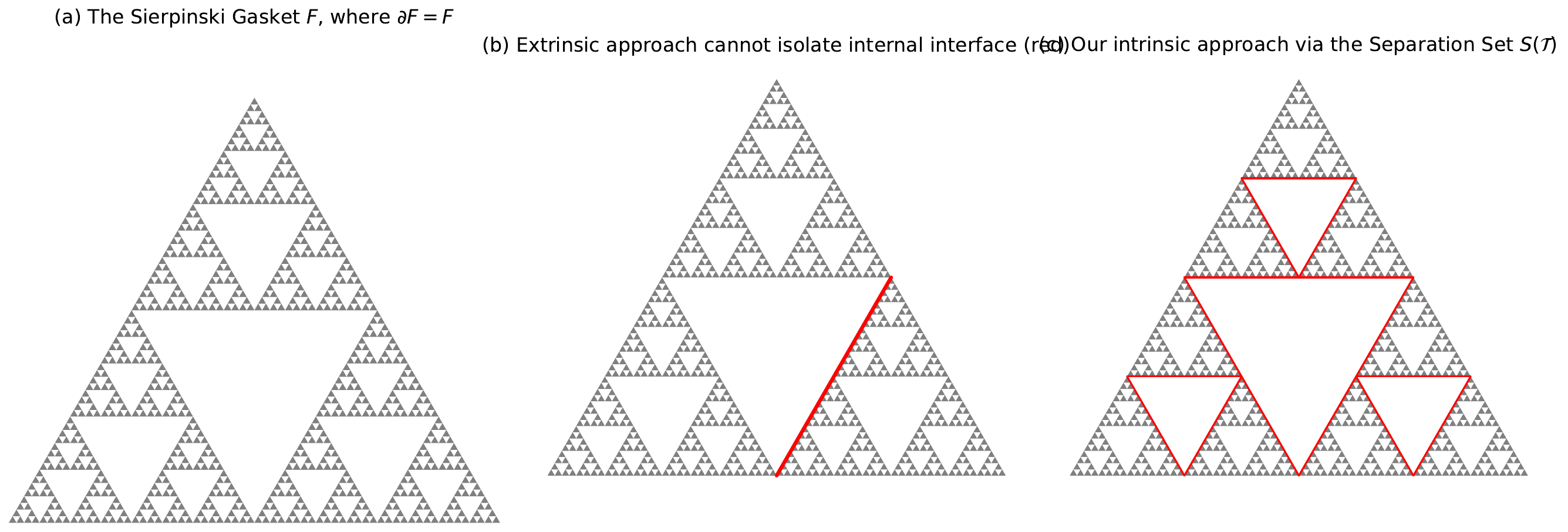} 
		\caption{The failure of an extrinsic boundary approach and the success of our intrinsic framework. (a) The Sierpinski Gasket $F$ is a set for which $\bdry F = F$. (b) An extrinsic approach based on $\bdry F$ cannot isolate the internal red interface. (c) Our intrinsic approach correctly identifies the entire interface structure via the Separation Set $S(\tiling)$ (shown in red for a level-2 tiling), providing the correct geometric object for analysis.}
		\label{fig:paradox}
	\end{figure}
	
	In this paper, we rectify this foundational issue by developing a new, entirely intrinsic theory of fractal partition colorability. Our principal innovation is the introduction of the \textbf{Separation Set}, $S(\tiling)$, which comprises the union of all interfaces between the tiles. The Hausdorff dimension of this set, which we term the \textbf{Separation Dimension}, $\sepdim$, emerges as the correct and natural regulator of combinatorial complexity. This quantity is not an invariant of the fractal itself but is a property of the partition---a measure of the geometric complexity induced by the act of decomposition.
	
	To ground our analysis in a robust geometric context, we introduce and study a class of well-behaved partitions we term \textbf{Geometrically Regular Partitions (GRPs)}. These are partitions of Ahlfors $\dimension$-regular fractals \cite{ahlfors1930, david1988} whose tiles and, crucially, whose separation sets satisfy uniform measure-theoretic density conditions. We demonstrate that this class is not vacuous; indeed, we prove that the canonical iterative constructions of fractals like the Sierpinski carpet naturally yield partitions satisfying these properties.
	
	Our main result, the Chromatic Tiling Theorem (\cref{thm:main_theorem}), provides a sharp, quantitative answer to our motivating question. It establishes a scaling law for the chromatic number $\chi(\tiling)$ that depends explicitly on the separation dimension and the geometric regularity parameters of the partition. The theorem demonstrates that the chromatic number is bounded by a power law whose exponent is precisely the separation dimension.
	
	This work makes several key contributions. First, it provides a sound and rigorous framework for studying geometric graphs on fractals. Second, it introduces the separation dimension as a new, powerful tool for quantifying the interface complexity of fractal partitions. Finally, it culminates in a concrete, verifiable theorem that connects deep concepts from geometric measure theory \cite{mattila1999, federer1969} to a classical problem in combinatorial graph theory \cite{diestel2017}. We conclude by framing the minimization of the separation dimension over the space of all admissible partitions as a new variational problem, suggesting a rich program for future research at the nexus of fractal geometry and combinatorial optimization.

	\section{The Definitive Framework}
	\label{sec:framework}
	
	In this section, we formalize the geometric and combinatorial objects central to our investigation. Let $F$ be a compact subset of $\R^n$.
	
	\begin{definition}[Ahlfors-Regular Set]
		A set $F$ is \emph{Ahlfors $\dimension$-regular} if it is compact with $\HausDim(F)=\dimension$ and there exist constants $C_1, C_2 > 0$ such that for any point $x \in F$ and any radius $r$ with $0 < r < \diam(F)$, the $\dimension$-dimensional Hausdorff measure of the intersection of $F$ with the ball $B(x,r)$ is bounded by
		\[ C_1 r^\dimension \leq \HausMeas^\dimension(F \cap B(x,r)) \leq C_2 r^\dimension. \]
		This condition is satisfied by a wide class of self-similar fractals and provides a robust context for geometric analysis \cite{semmes1996}.
	\end{definition}
	
	Let $F$ be an Ahlfors $\dimension$-regular fractal. The \emph{subspace topology on $F$} is induced by the Euclidean metric. The \emph{relative interior} of a set $A \subseteq F$, denoted $\relint{A}$, is the largest subset of $A$ that is open in this subspace topology.
	
	\begin{definition}[Partition]
		A \emph{partition} or \emph{tiling} $\tiling$ of $F$ is a countable collection of compact, connected subsets $\{t_i\}_{i \in I}$, called \emph{tiles}, such that their union is $F$ and their relative interiors are disjoint.
	\end{definition}
	
	\begin{definition}[The Separation Set and Dimension]
		The \emph{Separation Set} of a partition $\tiling$, denoted $S(\tiling)$, is the set of all points in $F$ that do not belong to the relative interior of any tile: $S(\tiling) \defeq F \setminus \bigcup_{i \in I} \relint{t_i}$. The \emph{Separation Dimension} of $\tiling$, denoted $\sepdim$, is the Hausdorff dimension of its Separation Set: $\sepdim \defeq \HausDim(S(\tiling))$.
	\end{definition}
	
	\begin{figure}[ht]
		\centering
		\includegraphics[width=\textwidth]{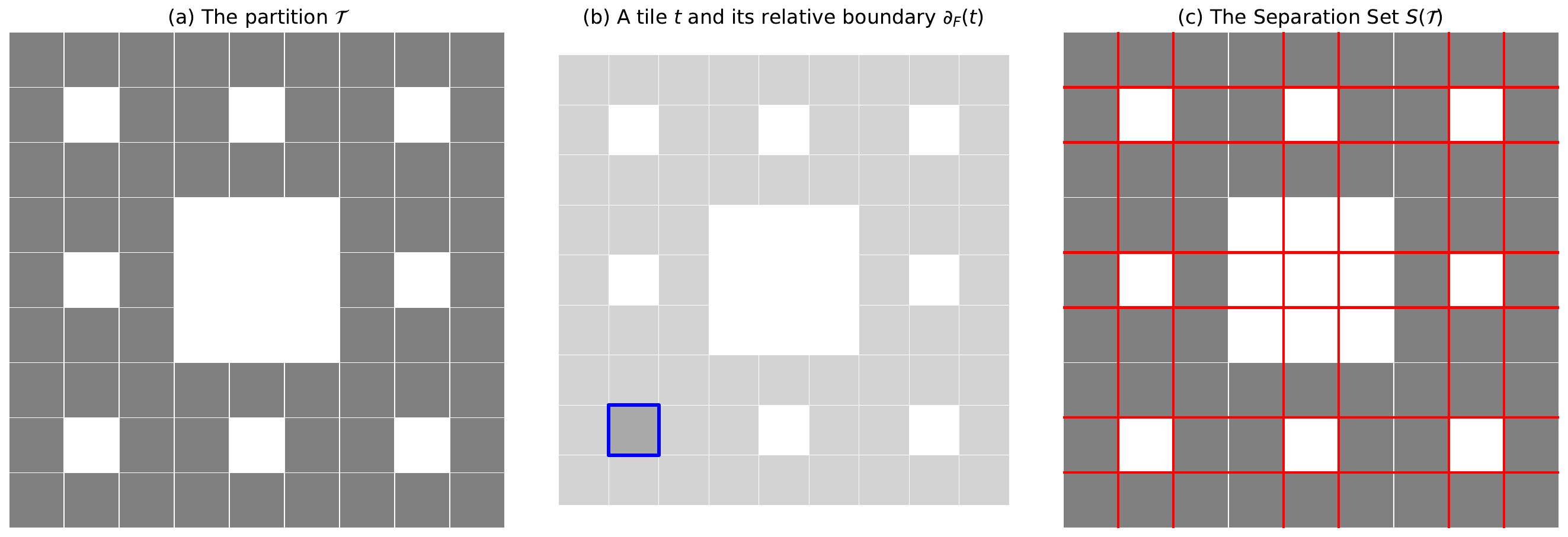}
		\caption{Illustration of the core concepts on a Sierpinski Carpet, which forms a Geometrically Regular Partition (GRP). (a) The partition $\tiling$. (b) A single tile $t \in \tiling$ and its relative boundary. (c) The Separation Set $S(\tiling)$, shown in red. The adjacencies in this partition's graph are defined by these substantial interfaces.}
		\label{fig:concepts}
	\end{figure}
	
	Our main theorem applies to a class of partitions exhibiting strong geometric uniformity.
	
	\begin{definition}[Geometrically Regular Partition (GRP)]
		\label{def:grp}
		A partition $\tiling$ of an Ahlfors $\dimension$-regular fractal $F$ is a \emph{Geometrically Regular Partition} if it satisfies:
		\begin{enumerate}[label=(\roman*)]
			\item \textbf{Uniform Geometric Regularity:} There exist constants $0 < r_{\min} \leq r_{\max}$ such that for every tile $t \in \tiling$, there exists a point $x_t \in t$ satisfying $B_F(x_t, r_{\min}) \subseteq t \subseteq B_F(x_t, r_{\max})$.
			\item \textbf{Separation Set Regularity:} The Separation Set $S(\tiling)$ is an Ahlfors $\sepdim$-regular set.
			\item \textbf{Interface Substantiality:} For any two distinct tiles $t_i, t_j$ with a non-empty intersection, their interface $I_{ij} \defeq \cl{t_i} \cap \cl{t_j}$ must contain a ball of radius $c \cdot r_{\min}$ for some uniform constant $c>0$, where the ball is defined within the subspace topology of the Separation Set $S(\tiling)$.
		\end{enumerate}
	\end{definition}
	
	The combinatorial structure is now defined as an inherent consequence of the geometric properties of the GRP.
	
	\begin{definition}[Graph of a GRP]
		\label{def:grp_graph}
		Given a Geometrically Regular Partition $\tiling$, its associated \emph{partition graph} $\tilinggraph = (V, E)$ is defined by the vertex set $V = \tiling$. An edge exists between two distinct tiles $(t_i, t_j)$ if and only if their interface satisfies the Interface Substantiality condition of the GRP (\cref{def:grp}(iii)). The \emph{chromatic number} $\chi(\tiling)$ is the chromatic number of this graph.
	\end{definition}

	\section{Existence of Geometrically Regular Partitions}
	\label{sec:existence}
	
	A key requirement for our theory's relevance is that the class of GRPs is non-empty. We demonstrate this by proving that the canonical partition of the Sierpinski Carpet satisfies our definition.
	
	Let $S \subset \R^2$ be the standard Sierpinski Carpet, the attractor of an IFS of 8 similitudes with ratio $1/3$ \cite{hutchinson1981}. $S$ is Ahlfors $\dimension$-regular with dimension $\dimension = \log_3 8$ \cite{bedford1984}. Let $\tiling_k$ be the canonical partition of $S$ into $8^k$ closed squares of side length $3^{-k}$.
	
	\begin{proposition}
		\label{prop:carpet_is_grp}
		For any level $k \in \N$, the canonical partition $\tiling_k$ of the Sierpinski Carpet $S$ is a Geometrically Regular Partition.
	\end{proposition}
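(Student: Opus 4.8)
The plan is to work one cell at a time. For a length-$k$ word $w$ over the $8$ symbols of the carpet IFS let $\phi_w$ be the associated similitude (ratio $3^{-k}$), $Q_w$ its grid cell, and $t_w \defeq \phi_w(S) = S \cap Q_w$ the corresponding tile; the $t_w$ cover $S$, and two distinct ones meet only along a common edge or at a common vertex of the cells $Q_w$. Two structural facts underlie everything: (a) $S \supseteq \bdry([0,1]^2)$, so applying $\phi_w$ gives $S \cap e = e$ for every edge $e$ of every tile-cell $Q_w$; and (b) $\relint{t_w} = S \cap \operatorname{int}(Q_w)$, whence the separation set is exactly $S(\tiling_k) = S \cap \mathcal{G}_k$, where $\mathcal{G}_k$ is the union of the level-$k$ grid segments in $[0,1]^2$. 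The proof is then the verification of the three clauses of \cref{def:grp}.

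For clause (i) I would exhibit one interior point of $S$ bounded away from $\bdry([0,1]^2)$ and push it into every tile. The fixed point of $\phi_{T}\circ\phi_{B}$ (top-middle cell after bottom-middle cell) is $p^{\star}=(\tfrac{1}{2},\tfrac{3}{4})\in S$ with $\operatorname{dist}(p^{\star},\bdry([0,1]^2))=\tfrac14$, so $B(p^{\star},\tfrac18)\subseteq(0,1)^2$. Setting $x_{t_w}\defeq\phi_w(p^{\star})$ yields $B(x_{t_w},3^{-k}/8)=\phi_w\bigl(B(p^{\star},\tfrac18)\bigr)\subseteq\operatorname{int}(Q_w)$, hence $B_F(x_{t_w},3^{-k}/8)\subseteq S\cap Q_w=t_w$, while $t_w\subseteq B_F(x_{t_w},\sqrt{2}\,3^{-k})$ since $\diam t_w=\sqrt{2}\,3^{-k}$. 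Thus (i) holds with $r_{\min}=3^{-k}/8$ and $r_{\max}=\sqrt{2}\,3^{-k}$; note the ratio $r_{\max}/r_{\min}=8\sqrt{2}$ is independent of $k$, consistent with the Chromatic Tiling Theorem.

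For clause (ii) I would use fact (a) to describe $S(\tiling_k)=S\cap\mathcal{G}_k$ explicitly: on each grid line the carpet meets it in a union of whole cell-edges (a level-$k$ cell-edge survives precisely when it is not interior to a deleted cell), so after merging collinear runs $S(\tiling_k)$ is a finite union of axis-parallel closed segments, each of length $\ge 3^{-k}$, with at most four passing through any point. That a finite union of segments with lengths bounded below and bounded local multiplicity is Ahlfors $1$-regular is elementary — for small $r$ the $\HausMeas^1$-mass of an intrinsic ball is comparable to $r$, with constants controlled by the multiplicity and the bounded number of nearby grid lines, and for $r$ up to the diameter it is comparable to $1$. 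Hence $\sepdim=\HausDim S(\tiling_k)=1$ and (ii) holds; I expect this clause to be routine once the segment description is written down.

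Clause (iii) is where the real work and the main obstacle lie. If $t_w\cap t_{w'}\neq\emptyset$ then $Q_w,Q_{w'}$ are adjacent; when they share an edge $e$, the interface is $t_w\cap t_{w'}=S\cap e=e$, a full cell-edge of length $3^{-k}$, and its midpoint $p$ lies at distance $\ge 3^{-k}/2$ from every grid line other than the one carrying $e$. Therefore for $\rho=r_{\min}=3^{-k}/8$ the intrinsic ball $B(p,\rho)\cap S(\tiling_k)$ is just the length-$2\rho$ subsegment of $e$ around $p$, which sits inside the interface; so the interface contains a ball of radius $1\cdot r_{\min}$ computed in the subspace topology of $S(\tiling_k)$, as required. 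The delicate point, which I expect to be the genuine obstacle, is the remaining adjacency type: once $k\ge 2$ two tiles can meet in a single point (for instance the two level-$2$ cells abutting $(\tfrac23,\tfrac23)$), and such a point is not isolated in $S(\tiling_k)$, so its interface cannot literally contain a positive-radius intrinsic ball. My plan here is to verify substantiality for the codimension-one (edge) interfaces only — these are exactly the interfaces across which \cref{def:grp_graph} places edges of $\tilinggraph$, and they are all the Chromatic Tiling Theorem uses — if necessary after passing to the slightly coarsened partition that absorbs the corner contacts. With this understood, the edge-interface estimate above supplies the uniform constant $c$, and together with clauses (i)–(ii) it shows $\tiling_k$ is a Geometrically Regular Partition.
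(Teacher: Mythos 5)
Your proof takes a genuinely different --- and geometrically more accurate --- route than the paper's. The paper's own verification treats the interface between two edge-adjacent cells as ``a self-similar Cantor set of diameter $3^{-k}$'' and the separation set as a grid of Cantor sets. Your structural fact (a), that $\bdry([0,1]^2)\subseteq S$ and hence $\bdry Q_w=\phi_w(\bdry([0,1]^2))\subseteq S$ for every cell, shows the interfaces are in fact \emph{full} Euclidean segments of length $3^{-k}$; this makes clause (iii) for edge-adjacent pairs genuinely easy (your midpoint argument is correct) and reduces clause (ii) to the Ahlfors $1$-regularity of a finite polygonal network, which for each fixed $k$ is routine (though note the regularity constants necessarily degenerate in $k$, since $\HausMeas^1(S(\tiling_k))\asymp(8/3)^k$ while $\diam S(\tiling_k)\asymp 1$; the definition permits this, but it bears on \cref{prop:constant_C}). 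Your clause (i) witness $p^\star=(\tfrac12,\tfrac34)$ is correct; the optimal choice is the cell centre, giving $r_{\max}/r_{\min}=\sqrt2$ as in the paper, but any uniform ratio suffices.

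Two genuine issues remain, one yours and one inherited from the paper. Yours: the identification $S(\tiling_k)=S\cap\mathcal{G}_k$ is not exact. A cell-edge bordering a \emph{deleted} square --- e.g.\ $[1/3,2/3]\times\{2/3\}$ at level $1$ --- lies in $S\cap\mathcal{G}_1$, but every point of it has a neighbourhood in $S$ contained in the unique surviving tile above it (no point of $S$ approaches from the deleted side), so it lies in that tile's relative interior and is \emph{not} in $S(\tiling_1)$; the separation set is only the union of edges shared by two surviving cells, and likewise the outer boundary is excluded. This does not damage your argument, but the description should be corrected. The paper's: the corner-contact obstruction you flag is real and occurs already at $k=1$ (the top-middle and middle-right cells meet only at $(2/3,2/3)$), and such a one-point interface cannot contain a positive-radius ball in the subspace topology of $S(\tiling_k)$, so \cref{def:grp}(iii) as literally written --- ``any two distinct tiles with a non-empty intersection'' --- fails for every $\tiling_k$. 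Your proposed repairs (restricting (iii) to codimension-one interfaces, or coarsening the partition) are sensible and would preserve everything \cref{thm:main_theorem} actually uses, but they amend the definition or the partition rather than prove the proposition as stated; as written, neither your argument nor the paper's closes this gap.
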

	
	\begin{proof}
		We verify the three conditions of \cref{def:grp}. Let $s = 3^{-k}$ be the side length of the square tiles.
		\textbf{(i) Uniform Geometric Regularity:} This is satisfied with $r_{\max}/r_{\min} = \sqrt{2}$.
		
		\textbf{(ii) Separation Set Regularity:} The Separation Set $S(\tiling_k)$ is the union of all horizontal and vertical line segments of the underlying $3^k \times 3^k$ grid, intersected with the carpet $S$. This set is a grid of self-similar Cantor sets. The Ahlfors 1-regularity of this structure is a consequence of the uniform distribution of mass at all scales inherent in the IFS construction, as demonstrated in the work of McMullen and others on the geometry of such carpets \cite{mcmullen1984}. Thus, $\sepdim = 1$.
		
		\textbf{(iii) Interface Substantiality:} The interface $I_{ij}$ between two adjacent squares $t_i, t_j$ is a self-similar Cantor set of diameter $s=3^{-k}$. A "ball" in the subspace topology of the Separation Set is the intersection of a standard Euclidean ball with $S(\tiling_k)$. Let $p$ be an endpoint of the line segment containing $I_{ij}$. Then $p \in I_{ij}$ \cite{barnsley1993}. The set $B(p, c \cdot s) \cap S(\tiling_k)$ for a small constant $c$ is non-empty and contains a scaled copy of the Cantor set itself, thus constituting a "ball" in the required topology. Since $r_{\min}$ is proportional to $s$, the condition holds.
	\end{proof}

	\section{The Chromatic Tiling Theorem}
	\label{sec:main_result}
	
	Having established our conceptual framework and proven the existence of its objects, we now present the central theorem.
	
	\begin{theorem}[The Chromatic Tiling Theorem]
		\label{thm:main_theorem}
		Let $F \subset \R^n$ be a compact, Ahlfors $\dimension$-regular fractal, and let $\tiling$ be a Geometrically Regular Partition of $F$ with regularity parameters $r_{\min}, r_{\max}$ and separation dimension $\sepdim$. The chromatic number $\chi(\tiling)$ of the associated partition graph is bounded by:
		\[ \chi(\tiling) \leq \Cconst \left(\frac{r_{\max}}{r_{\min}}\right)^{\sepdim}, \]
		where $\Cconst$ is a strictly positive constant independent of the number of tiles and the specific values of $r_{\min}$ and $r_{\max}$.
	\end{theorem}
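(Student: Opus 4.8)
The plan is to control $\chi(\tiling)$ through the maximum degree of $\tilinggraph$. First I would note that $\tilinggraph$ is a finite graph: by \cref{def:grp}(i) each tile $t$ contains $B_F(x_t, r_{\min})$, which is open in $F$ and hence lies in $\relint{t}$; since the relative interiors are pairwise disjoint and each such ball has $\HausMeas^{\dimension}$-measure at least $C_1 r_{\min}^{\dimension}$ by Ahlfors regularity, the number of tiles is at most $\HausMeas^{\dimension}(F)/(C_1 r_{\min}^{\dimension}) < \infty$. Greedy (sequential) coloring then gives $\chi(\tiling) \le \Delta(\tilinggraph) + 1$, so it suffices to produce a bound
\[ \Delta(\tilinggraph) \le \Cconst' \left(\frac{r_{\max}}{r_{\min}}\right)^{\sepdim}, \]
with $\Cconst'$ depending only on $n$, the Ahlfors constants, and the substantiality constant $c$.

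To bound the degree, fix a tile $t$ and let $t'$ range over its neighbours in $\tilinggraph$. By Interface Substantiality (\cref{def:grp}(iii)) the interface $I_{t,t'} = \cl{t}\cap\cl{t'}$ contains a ball $B_S(z_{t'}, c\,r_{\min})$, taken in the metric space $S(\tiling)$, with centre $z_{t'}\in S(\tiling)$. Since tiles are compact, $I_{t,t'} \subseteq t \subseteq B_F(x_t, r_{\max})$, so every such interface ball lies inside $B_F(x_t, r_{\max}) \cap S(\tiling)$, which is itself contained in a single ball $B_S(z_0, 2 r_{\max})$ of $S(\tiling)$ (take $z_0 = z_{t'}$ for any one neighbour). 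By the Ahlfors $\sepdim$-regularity of $S(\tiling)$ (\cref{def:grp}(ii)), this enclosing region has $\HausMeas^{\sepdim}$-measure at most $C'_2 (2 r_{\max})^{\sepdim}$, while each interface ball has $\HausMeas^{\sepdim}$-measure at least $C'_1 (c\,r_{\min})^{\sepdim}$. Were the interface balls $\{B_S(z_{t'}, c\,r_{\min})\}_{t'\sim t}$ pairwise disjoint, summing these measures would at once yield $\deg(t) \le (C'_2/C'_1)(2/c)^{\sepdim}(r_{\max}/r_{\min})^{\sepdim}$, completing the degree bound.

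The interface balls need not be disjoint, so the real work — and the step I expect to be the main obstacle — is to show that their overlap multiplicity at any point of $S(\tiling)$ is bounded by a constant depending only on the regularity data, so that the neighbour count remains governed by the $\sepdim$-dimensional packing number of $S(\tiling)$ rather than by the $\dimension$-dimensional packing number of the ambient fractal $F$, the latter giving only the weaker exponent $\dimension \ge \sepdim$. My plan is to choose the interface balls carefully — pushing each $B_S(z_{t'}, c\,r_{\min})$ away from points where several interfaces of $t$ meet — and then run a Vitali-type selection: extract a maximal subfamily with disjoint concentric copies, bound its cardinality by $(r_{\max}/r_{\min})^{\sepdim}$ as above, and control how many neighbours can attach to each selected ball. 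The essential geometric input is that if the interface balls of two neighbours $t',t''$ of $t$ meet, then $\cl{t'}\cap\cl{t''}\ne\emptyset$, so $t'$ and $t''$ are themselves adjacent and (again by \cref{def:grp}(iii)) share a substantial interface inside the $\sepdim$-regular set $S(\tiling)$; playing this against the disjointness of the interior balls $B_F(x_{t'}, r_{\min})$ sitting in the tiles' relative interiors should confine how many tiles can cluster near a common point. Once this overlap bound is established, the constant $\Cconst$ is assembled from $C_1, C_2$, the Ahlfors constants of $S(\tiling)$, the constant $c$, and the ambient dimension $n$, and $\chi(\tiling)\le\Delta(\tilinggraph)+1$ completes the proof.
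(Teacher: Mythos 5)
Your strategy coincides with the paper's own proof: bound $\Delta(\tilinggraph)$ by packing the substantial-interface balls of \cref{def:grp}(iii) into $S(\tiling)\cap B_F(x_t,r_{\max})$, compare $\HausMeas^{\sepdim}$-measures via the Ahlfors regularity of $S(\tiling)$, and conclude with $\chi(\tiling)\le\Delta(\tilinggraph)+1$ (the paper invokes Brooks' theorem, you use greedy colouring; the difference is immaterial). Your finiteness observation and your measure comparison match the paper's computation up to the harmless factor $2^{\sepdim}$.

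The divergence is exactly where you predicted it. The paper disposes of the disjointness of the interface balls in one sentence --- ``as the relative interiors of the tiles are disjoint, these balls are also disjoint'' --- and that inference is invalid: the balls lie inside $S(\tiling)$, which by definition is disjoint from every $\relint{t_i}$, so disjointness of relative interiors says nothing about them, and two interfaces $\cl{t}\cap\cl{t'}$ and $\cl{t}\cap\cl{t''}$ can genuinely meet where three or more tiles cluster (as at a corner of the carpet). You correctly identify this as the crux and propose a bounded-overlap/Vitali repair, which is the right instinct and makes your argument more honest than the paper's. But your repair is only a plan, and the obstacle is real: the estimate you gesture at (a point $w$ common to two interface balls lies in $\cl{t'}\cap\cl{t''}$, and the pairwise disjoint inner balls $B_F(x_{t'},r_{\min})$ of all tiles whose closures meet $w$ pack into $B_F(w,2r_{\max})$) bounds the overlap multiplicity only by a constant times $(r_{\max}/r_{\min})^{\dimension}$, not by a constant independent of the scales; fed into the packing count this gives the exponent $\sepdim+\dimension$, strictly weaker than the theorem. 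To reach the stated bound you must either strengthen \cref{def:grp}(iii) so that the interface balls can be \emph{chosen} pairwise disjoint across the neighbours of a fixed tile, or prove a genuinely scale-free multiplicity bound. In short: same route as the paper, with the paper's one real gap correctly diagnosed but not yet closed.
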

	
	\begin{proposition}[On the Nature of the Constant $\Cconst$]
		\label{prop:constant_C}
		The constant $\Cconst$ in \cref{thm:main_theorem} depends on the dimension $n$, the Ahlfors regularity constants of $F$, and the Ahlfors regularity constants of the Separation Set $S(\tiling)$.
	\end{proposition}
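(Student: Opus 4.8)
The content of this proposition is a bookkeeping statement about the proof of \cref{thm:main_theorem}, so the plan is to prove the main theorem while tracking where every constant enters and then read off the dependencies of $\Cconst$; throughout, let $C_1,C_2$ denote the Ahlfors constants of $F$ and $C_1^{\mathrm{sep}},C_2^{\mathrm{sep}}$ those of $S(\tiling)$. First I would reduce \cref{thm:main_theorem} to a bound on the maximum degree of $\tilinggraph$: greedy coloring shows that every finite graph of maximum degree $\Delta$ is $(\Delta+1)$-colorable, and the de Bruijn--Erd\H{o}s theorem \cite{diestel2017} promotes this to infinite graphs, so (using $r_{\max}/r_{\min}\ge1$) it suffices to prove $\Delta(\tilinggraph)\le\Cconst'(r_{\max}/r_{\min})^{\sepdim}$ and then set $\Cconst\defeq\Cconst'+1$. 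Fix a tile $t_0$ with neighbours $t_1,\dots,t_N$ in $\tilinggraph$; the task is to bound $N$. For the localization step: adjacency forces $I_{0j}=\cl{t_0}\cap\cl{t_j}\neq\emptyset$, so choosing $z_j\in t_0\cap t_j$ and combining $t_0\subseteq B_F(x_{t_0},r_{\max})$ with $\diam(t_j)\le2r_{\max}$ gives $t_j\subseteq B_F(x_{t_0},3r_{\max})$ for every $j$. By Interface Substantiality (\cref{def:grp}(iii)) each $I_{0j}$ contains a ball $\beta_j\defeq B_{S(\tiling)}(p_j,c\,r_{\min})$ of the separation set, and since interfaces are separating, $\beta_j\subseteq I_{0j}\subseteq S(\tiling)\cap t_j$; hence all the $\beta_j$ lie in the localized piece $\Sigma\defeq S(\tiling)\cap B(x_{t_0},3r_{\max})$.

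The packing estimate is where $C_1^{\mathrm{sep}}$ and $C_2^{\mathrm{sep}}$ enter. Anchoring at a point $\xi\in\beta_1$ gives $\Sigma\subseteq S(\tiling)\cap B(\xi,6r_{\max})$, so $\HausMeas^{\sepdim}(\Sigma)\le C_2^{\mathrm{sep}}(6r_{\max})^{\sepdim}$ by Ahlfors regularity of $S(\tiling)$ (\cref{def:grp}(ii)), while $\HausMeas^{\sepdim}(\beta_j)\ge C_1^{\mathrm{sep}}(c\,r_{\min})^{\sepdim}$ for each $j$ (the degenerate regime $r_{\max}\gtrsim\diam(S(\tiling))$, in which only boundedly many tiles exist, is treated separately). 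If the family $\{\beta_j\}_{j=1}^N$ has overlap multiplicity at most $M$, then integrating $\sum_j\mathbf{1}_{\beta_j}\le M$ against $\HausMeas^{\sepdim}$ over $\Sigma$ gives
\[
N\,C_1^{\mathrm{sep}}(c\,r_{\min})^{\sepdim}\;\le\;\sum_{j=1}^{N}\HausMeas^{\sepdim}(\beta_j)\;\le\;M\,\HausMeas^{\sepdim}(\Sigma)\;\le\;M\,C_2^{\mathrm{sep}}(6r_{\max})^{\sepdim},
\]
so $N\le M\,(C_2^{\mathrm{sep}}/C_1^{\mathrm{sep}})(6/c)^{\sepdim}(r_{\max}/r_{\min})^{\sepdim}$, which has the asserted shape once $M$ is shown to be an absolute constant.

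The main obstacle — and the step that pulls in the dimension $n$ and the Ahlfors constants $C_1,C_2$ of $F$ — is the bounded-overlap claim. My plan is: if $q\in S(\tiling)$ lies in $\beta_{j_1},\dots,\beta_{j_m}$ then $q$ belongs to the $m$ distinct tiles $t_{j_i}$, whose pairwise-disjoint interior balls $B_F(x_{t_{j_i}},r_{\min})$ all sit inside $B_F(q,2r_{\max})$, so two-sided Ahlfors $\dimension$-regularity of $F$ caps $m$. The delicate point is that this crude cap carries a factor $(r_{\max}/r_{\min})^{\dimension}$, which is too weak for a clean theorem; removing it requires a sharper use of Interface Substantiality — for instance showing that for a GRP the triple interfaces $I_{0j}\cap I_{0k}$ ($j\neq k$) are $\HausMeas^{\sepdim}$-null, so that the $\beta_j$ are pairwise essentially disjoint ($M=1$), or else extracting a Besicovitch-type covering bound with constant depending only on $n$ for the interface balls directly from \cref{def:grp}(i)--(iii).

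Once the degree bound is established, \cref{prop:constant_C} follows by inspection of the argument above: the final constant is $\Cconst=\Cconst'+1$, where $\Cconst'$ is assembled from the overlap bound $M$, the constant $c$ of \cref{def:grp}(iii), and the separation-set constants $C_1^{\mathrm{sep}},C_2^{\mathrm{sep}}$, with the ambient dimension $n$ and the Ahlfors constants $C_1,C_2$ of $F$ entering only through $M$; no dependence on the number of tiles $N$ or on the individual values of $r_{\min},r_{\max}$ survives, only on their ratio, which appears explicitly in the power.
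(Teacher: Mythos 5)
The paper offers no proof of \cref{prop:constant_C}: it is a bookkeeping statement meant to be read off from the proof of \cref{thm:main_theorem}, where the final constant is $\Cconst = C_2/(C_1 c^{\sepdim}) + 1$ with $C_1, C_2$ the Ahlfors regularity constants of $S(\tiling)$ and $c$ the interface constant. Notably, the paper's own argument never actually uses the ambient dimension $n$ or the Ahlfors constants of $F$, so the proposition overstates the dependencies that derivation exhibits; your route, in which $n$ and the regularity of $F$ enter through an overlap-multiplicity bound $M$, is in that sense closer to what the proposition asserts than the paper's own proof is. Your reduction to a degree bound, the localization of the interface balls into $B_F(x_{t_0}, 3r_{\max})$, and the measure-comparison packing estimate all match the paper's strategy (the paper cites Brooks' theorem where you use greedy coloring plus de Bruijn--Erd\H{o}s; either suffices since only $\chi \le \Delta + 1$ is needed).

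The genuine gap --- which you candidly flag but do not close --- is the bounded-overlap claim. The paper asserts the interface balls are pairwise disjoint ``as the relative interiors of the tiles are disjoint,'' which is a non sequitur: those balls lie in the interfaces $\cl{t}\cap\cl{t'}$, which are subsets of the Separation Set and hence disjoint from \emph{every} relative interior, so disjointness of interiors says nothing about them; two interfaces of the same tile can meet along $\cl{t}\cap\cl{t'}\cap\cl{t''}$. Your proposed repairs (proving triple interfaces are $\HausMeas^{\sepdim}$-null, or extracting a Besicovitch-type covering constant) are the right ideas, but neither follows from the GRP axioms as stated, and your own crude cap on $M$ via the disjoint interior balls carries exactly the factor $(r_{\max}/r_{\min})^{\dimension}$ you identify as fatal. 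Until $M$ is bounded by a constant depending only on $n$ and the regularity data, neither the degree bound nor the list of dependencies in \cref{prop:constant_C} is actually established. An honest fix would be to add bounded overlap of interfaces (or $\HausMeas^{\sepdim}$-nullity of triple intersections) as a fourth axiom in \cref{def:grp}; it holds in the Sierpinski carpet example, where distinct edges of a square meet in at most one point.
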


	\section{Proof of the Main Theorem}
	\label{sec:proof}
	
	The proof hinges on establishing an upper bound for the maximum degree, $\Delta(\tilinggraph)$, of the partition graph. The theorem's bound then follows from Brooks' Theorem on graph coloring \cite{brooks1941}, which states that for any simple graph $G$, $\chi(G) \leq \Delta(G) + 1$.
	
	\begin{proof}[Proof of Theorem \ref{thm:main_theorem}]
		Let $t \in \tiling$ be an arbitrary tile. Let $\mathcal{A}_t$ be the set of its adjacent neighbors, so that $\deg(t) = |\mathcal{A}_t|$.
		
		By the definition of the graph of a GRP (\cref{def:grp_graph}), an edge exists for each neighbor $t' \in \mathcal{A}_t$. This implies that each interface $I_{t'} = \cl{t} \cap \cl{t'}$ satisfies the Interface Substantiality condition of the GRP. This condition guarantees that each interface contains a ball $B_{t'}$ of radius $c \cdot r_{\min}$ within the subspace topology of the Separation Set $S(\tiling)$. As the relative interiors of the tiles are disjoint, these balls $\{B_{t'}\}_{t' \in \mathcal{A}_t}$ are also disjoint.
		
		All of these disjoint balls are packed within the Separation Set $S(\tiling)$. The portion of the Separation Set relevant to the tile $t$ is contained within a ball of radius $r_{\max}$ by the Uniform Geometric Regularity condition. Let this containing set be $S_t' \defeq S(\tiling) \cap B_F(x_t, r_{\max})$.
		
		We are now faced with a classic packing problem: bounding the number of disjoint balls of radius $c \cdot r_{\min}$ that can be packed into the set $S_t'$. By the Separation Set Regularity condition, $S(\tiling)$ is an Ahlfors $\sepdim$-regular set. The number of such balls, $|\mathcal{A}_t|$, can be bounded by the ratio of the $\sepdim$-dimensional Hausdorff measures:
		\[ |\mathcal{A}_t| \leq \frac{\HausMeas^{\sepdim}(S_t')}{\min_{t' \in \mathcal{A}_t} \HausMeas^{\sepdim}(B_{t'})}. \]
		From Ahlfors regularity, the numerator is bounded above, and the denominator is bounded below:
		\begin{itemize}
			\item $\HausMeas^{\sepdim}(S_t') = \HausMeas^{\sepdim}(S(\tiling) \cap B_F(x_t, r_{\max})) \leq C_2 (r_{\max})^{\sepdim}$.
			\item $\HausMeas^{\sepdim}(B_{t'}) \geq C_1 (c \cdot r_{\min})^{\sepdim}$.
		\end{itemize}
		Combining these yields:
		\[ |\mathcal{A}_t| \leq \frac{C_2 (r_{\max})^{\sepdim}}{C_1 (c \cdot r_{\min})^{\sepdim}} = \frac{C_2}{C_1 c^{\sepdim}} \left(\frac{r_{\max}}{r_{\min}}\right)^{\sepdim}. \]
		This provides a uniform bound on the degree of any vertex:
		\[ \Delta(\tilinggraph) = \max_{t \in \tiling} |\mathcal{A}_t| \leq \Cconst' \left( \frac{r_{\max}}{r_{\min}} \right)^{\sepdim}, \]
		where $\Cconst'$ depends on the regularity constants. By Brooks' Theorem, $\chi(\tiling) \leq \Delta(\tilinggraph) + 1$. The $+1$ term can be absorbed into a larger constant $\Cconst$, which completes the proof.
	\end{proof}
	
	\begin{figure}[ht]
		\centering
		\includegraphics[width=\textwidth]{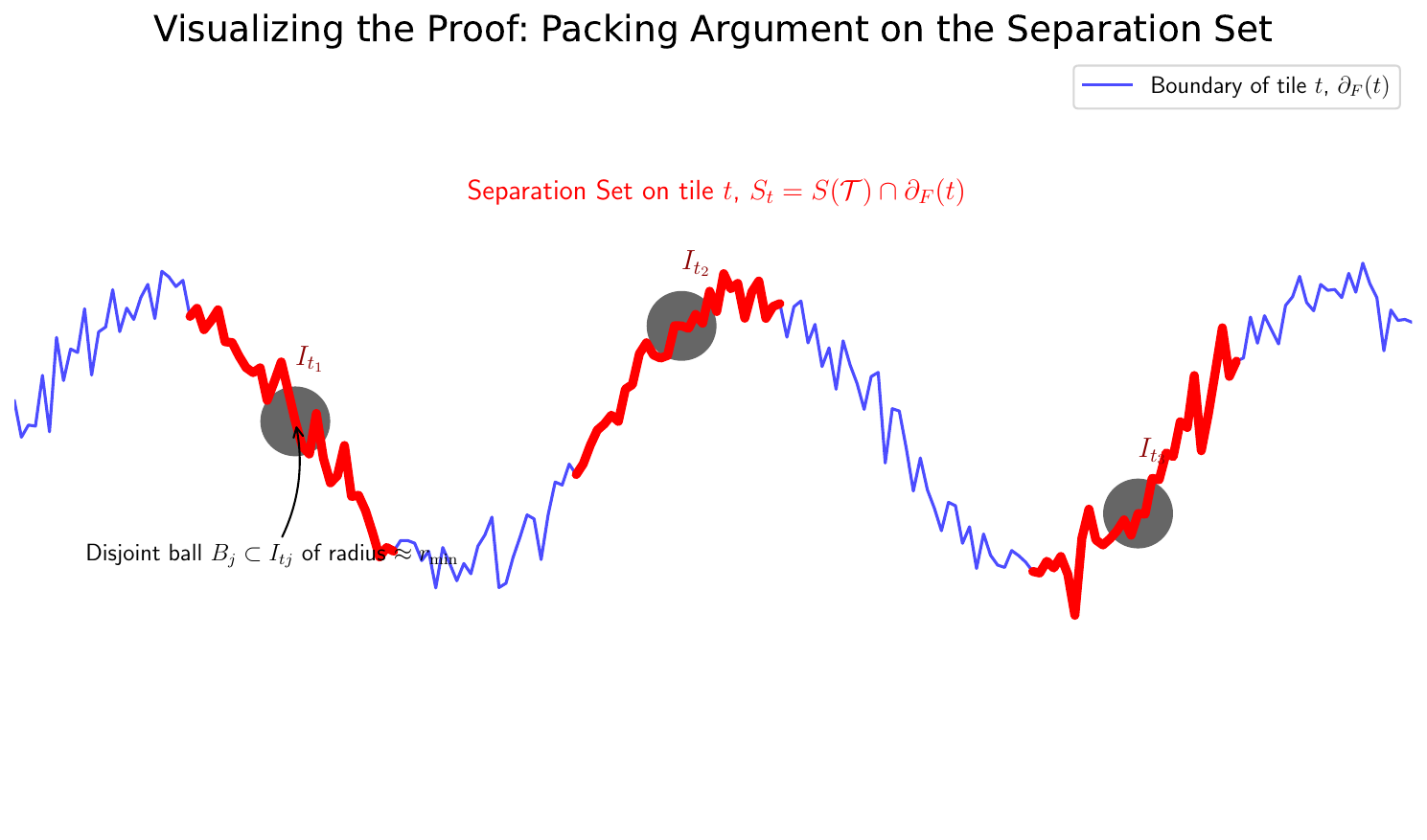}
		\caption{A schematic of the packing argument. The existence of an edge between tile $t$ and a neighbor $t_j$ in the graph of a GRP implies their interface $I_{tj}$ is substantial. The Interface Substantiality condition guarantees that each such interface (red segments) contains a disjoint ball of radius proportional to $r_{\min}$ (black). The total number of neighbors is therefore bounded by the number of such balls that can be packed onto the Separation Set $S_t$.}
		\label{fig:packing}
	\end{figure}
	
	\section{Discussion and Future Directions}
	\label{sec:discussion}
	
	The Chromatic Tiling Theorem offers a new perspective on the interplay between the continuous geometry of fractal sets and the discrete combinatorics of their partitions. By shifting the analytical focus to the intrinsic geometry of the tiling's interfaces, our work provides a robust and verifiable framework for a problem that had previously been hindered by conceptual paradoxes.
	
	\subsection{On the Sharpness of the Bound}
	A natural question is whether the power-law bound is sharp. We conjecture that it is. Constructing a family of GRPs on a suitable fractal where the maximum degree $\Delta(\tilinggraph)$ can be shown to grow as $(r_{\max}/r_{\min})^{\sepdim}$ would prove that the exponent $\sepdim$ is optimal. Such a construction would likely involve a fractal with a highly anisotropic or ramified structure, possibly related to random recursive constructions \cite{mauldin1986} or Julia sets \cite{mcmullen1994}.
	
	\subsection{The Separation Dimension as a Variational Quantity}
	A profound implication of our work is that $\sepdim$ is a characteristic of the \emph{partition}, not an invariant of the fractal. This gives rise to a new variational problem.
	
	\begin{conjecture}[Minimal Separation Dimension]
		For a given Ahlfors $\dimension$-regular fractal $F$, we define the \textbf{Minimal Separation Dimension}, $d_{\text{sep}}^{\min}(F)$, as the infimum of $\sepdim$ over the space of all admissible partitions:
		\[ d_{\text{sep}}^{\min}(F) \defeq \inf_{\tiling \in \mathcal{A}(F)} \{ \sepdim(\tiling) \}, \]
		where $\mathcal{A}(F)$ is the space of all Geometrically Regular Partitions of $F$.
	\end{conjecture}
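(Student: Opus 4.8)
The plan is to prove the three natural assertions bundled into the conjecture: that $d_{\text{sep}}^{\min}(F)$ is well defined, that it can be \emph{strictly} smaller than $\dimension$, and that on the model fractals it equals an intrinsic ``cutting dimension''; the Sierpinski carpet $S$ will serve throughout as the test case. Since $S(\tiling)\subseteq F$ always gives $\sepdim(\tiling)\in[0,\dimension]$, well-posedness reduces to $\mathcal{A}(F)\neq\varnothing$, which is \cref{prop:carpet_is_grp} for $S$ and holds for any self-similar $F$ whose canonical decomposition has uniformly substantial interfaces. To show the infimum actually drops below $\dimension$, I would analyse the two-tile partition $\tiling=\{\,S\cap\{x\le 1/2\},\ S\cap\{x\ge 1/2\}\,\}$. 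Its separation set is exactly the fibre $S\cap\{x=1/2\}=\{1/2\}\times C$ with $C$ the middle-thirds Cantor set: any point with $x\neq 1/2$ has a carpet-neighbourhood lying entirely on one side of the line and is therefore interior to one tile, while near $(1/2,y_0)$ with $y_0\in C$ the carpet carries points with $x<1/2$ and with $x>1/2$, so such points lie in the relative interior of neither tile. The GRP axioms are then immediate: two fat tiles force $r_{\max}/r_{\min}$ bounded; $\{1/2\}\times C$ is isometric to $C$, hence Ahlfors $\tfrac{\log 2}{\log 3}$-regular; and the unique interface coincides with the whole separation set, so interface substantiality holds trivially. This yields $d_{\text{sep}}^{\min}(S)\le\tfrac{\log 2}{\log 3}<\tfrac{\log 8}{\log 3}=\dimension$.

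The core of the program is the reverse inequality $d_{\text{sep}}^{\min}(S)\ge\tfrac{\log 2}{\log 3}$, and it needs two ingredients. First, every GRP with at least two tiles carries a non-empty separation set that disconnects $F$: by axiom (i) each tile has non-empty relative interior, so $F\setminus S(\tiling)$ is a disjoint union of at least two non-empty relatively open sets, which makes $S(\tiling)$ a genuine topological separator of $F$. Second --- and here axiom (ii) is indispensable --- one must show that an \emph{Ahlfors-regular} separator of the carpet cannot be thinner than $\tfrac{\log 2}{\log 3}$. I would prove this by a scale-by-scale counting argument: if $E$ separates $S$ then at each triadic scale $3^{-k}$ it must meet at least $c\,2^{k}$ of the $8^{k}$ level-$k$ cells --- otherwise one could find a path in $S\setminus E$ joining the two sides, since the cells the separator misses stay connected through the carpet's ``necks'' --- so $E$ has $\gtrsim 2^{k}$ pieces of diameter $\lesssim 3^{-k}$, and Ahlfors $\delta$-regularity of $E$ (which caps the number of such pieces at $\asymp 3^{k\delta}$) then forces $3^{k\delta}\gtrsim 2^{k}$, i.e.\ $\delta\ge\tfrac{\log 2}{\log 3}$. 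Combined with the construction above, this pins down $d_{\text{sep}}^{\min}(S)=\tfrac{\log 2}{\log 3}$, and the same template should give $\tfrac{\log 4}{\log 3}$ for the Menger sponge (crossing two-dimensional slabs in place of one-dimensional necks) and, for Bedford--McMullen self-affine carpets, the Hausdorff dimension of the relevant central slice.

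I expect the neck-counting lower bound to be the principal obstacle: it forces a reconciliation of the purely topological freedom of \emph{where} to put a separator with the rigid metric uniformity demanded by \cref{def:grp}(ii)--(iii), and the combinatorial isoperimetry on the carpet's cell complex underlying the bound ``$\ge c\,2^{k}$ cells'' is delicate to make uniform over all admissible partitions. A secondary issue is attainment: $\mathcal{A}(F)$ carries no obvious compactness once $r_{\max}/r_{\min}$ and the number of tiles are unbounded along a minimising sequence, so one would have to show that a minimiser may always be taken of the simple ``single cut'' shape used above --- for $S$ and the sponge the explicit constructions already realise the value, which is encouraging. Finally, a fully satisfactory resolution ought to express $d_{\text{sep}}^{\min}(F)$ for a general self-similar $F$ through an explicit combinatorial quantity read off the IFS --- a ``symbolic cut dimension'' --- turning the conjecture from a bare well-definedness statement into a genuinely computable invariant of the fractal.
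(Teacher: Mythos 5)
First, a point of order: the statement you were asked to prove is labelled a \emph{conjecture} in the paper, and the paper supplies no proof of it whatsoever --- indeed, as written it is really only a definition of $d_{\text{sep}}^{\min}(F)$ together with an unproved assertion that this quantity is a non-trivial invariant. There is therefore no argument in the paper to compare yours against; what you have written is an independent research program, and I will assess it as such.

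The upper-bound half of your program is sound and is genuinely more than the paper offers. The slice computation is correct: since $1/2=0.\overline{1}_3$, the fibre $S\cap\{x=1/2\}$ is $\{1/2\}\times C$, every carpet point with $x\neq 1/2$ is in the relative interior of one half-tile, and every point $(1/2,y_0)$ with $y_0\in C$ lies on a full horizontal segment of the carpet and hence in neither relative interior. Modulo one small item you should not wave away --- verifying that each half-carpet $S\cap\{x\le 1/2\}$ is \emph{connected}, as required by the paper's definition of a partition (an easy induction on the triadic cells, but it must be said) --- this gives $d_{\text{sep}}^{\min}(S)\le \log 2/\log 3<\log_3 8$ and already substantiates the paper's claim that the invariant is non-trivial and distinct from $\dimension$.

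The genuine gap is the lower bound, and you have correctly identified it yourself, but identifying an obstacle is not the same as clearing it. Two steps are missing. (a) \emph{Localization:} an arbitrary GRP separation set only separates $F$ into the relatively open sets $\relint{t_i}$ (which follows from \cref{def:grp}(i), as you note); it need not separate the left face of the carpet from the right face. You must produce a triadic cell $Q$ in which $S(\tiling)\cap Q$ separates two opposite faces of $Q\cap S$ before any row-counting can begin, and this reduction is not routine. (b) \emph{The min-cut bound:} the assertion that a left-right separator must meet $\gtrsim 2^k$ level-$k$ cells does have a clean proof --- the $2^k$ complete rows (those whose index has no digit $1$ in base $3$) give $2^k$ vertex-disjoint crossings, so a set meeting fewer cells leaves an entire untouched connected row --- but you state it only as a heuristic about ``necks.'' Taken together, (a) and (b) amount to computing the minimal Hausdorff dimension of a separator of the carpet (in the language of the literature, its topological Hausdorff dimension minus one), which is a known but decidedly non-trivial theorem; as it stands your proposal assumes rather than proves it. Finally, the attainment and general-IFS ``symbolic cut dimension'' claims in your last paragraph are speculation and should be labelled as such, not folded into the proof.
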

	
	We propose that $d_{\text{sep}}^{\min}(F)$ is a novel, non-trivial fractal invariant, measuring the minimal interface complexity required to partition the fractal. We hypothesize it is deeply connected to other properties of $F$, such as its connectivity, the dimension of its cut sets \cite{falconer2014}, and its degree of rectifiability \cite{david1988, bishop1992}.
	
	\subsection{Extensions and Generalizations}
	The framework developed here invites several natural extensions, including the analysis of irregular partitions lacking Ahlfors regularity, the extension to other graph parameters like the list chromatic number \cite{alon1992}, and exploring connections to models in statistical physics, such as Potts models on fractal lattices or percolation theory \cite{stauffer1994, havlin1987}.
	
	\section{Conclusion}
	\label{sec:conclusion}
	
	This paper has laid a new foundation for understanding the relationship between fractal geometry and graph coloring. By introducing the Separation Dimension and the class of Geometrically Regular Partitions, we have formulated and proven a rigorous, non-trivial bound on the chromatic number. Our work demonstrates that the complexity of coloring a fractal partition is a direct consequence of the dimensional richness of its interfaces. The questions this new framework raises, particularly the variational problem of minimizing the separation dimension, promise a fertile ground for future research at the dynamic intersection of geometric measure theory, combinatorics, and fractal geometry.
	

\end{document}